\theoremstyle{plain}
\newtheorem{theorem}{Theorem}
\newtheorem{lemma}[theorem]{Lemma}
\theoremstyle{definition}
\newtheorem{remark}[theorem]{Remark}
\theoremstyle{remark}
\def\bq{\begin{eqnarray}}
\def\eq{\end{eqnarray}}
\def\bqq{\begin{eqnarray*}}
\def\eqq{\end{eqnarray*}}
\def\nn{\nonumber}
\def\eps{\varepsilon}
\def\wto{\rightharpoonup}
\def \ess {\rm ess}
\def\R{\mathbb{R}}
\def\cE {\mathcal{E}}
\def \d {{\rm d}}
\title{Blow-up profile of Bose-Einstein condensate with singular potentials}
\author{Phan Thanh Viet \footnote{T.V. Phan,  Ton Duc Thang university, 19 Nguyen Huu Tho Str., Tan Phong Ward, District 7, Ho Chi Minh City. Email: phanthanhviet@tdt.edu.vn} }
\date{\normalsize\today}
\begin{document}

\maketitle


\begin{abstract}
The paper is concerned with the Bose-Einstein condensate described by the attractive Gross-Pitaevskii equation in $\R^2$, where the external potential is unbounded from below. We show that when the interaction strength increases to a critical value, the Gross-Pitaevskii minimizer collapses to one singular point and we analyze the details of the collapse exactly up to the leading order. 
 \bigskip
   
 \noindent {\bf Keywords:} Bose-Einstein condensate, attractive Gross-Pitaevskii equation, Gagliardo-Nirenberg inequality,   singular potentials, blow-up profile.
\end{abstract}


\section{Introduction}

The stability of the Bose-Einstein condensate depends crucially on the interaction between particles. It is remarkable that the condensate may collapse when the interaction is attractive and the number of particles excesses a critical value, see e.g.  \cite{BraSacTolHul-95,SacStoHul-98,KagMurShl-98}. In the present paper, we will consider an example when the details of the collapse can be analyzed exactly up to the leading order. 

We consider the 2D Gross-Pitaevskii energy functional
$$
\cE_a(u)=\int_{\R^2} \Big( |\nabla u(x)|^2 + V(x)|u(x)|^2 -\frac{a}{2} |u(x)|^4\Big) \d x
$$
where $V$ is an external potential $V$ and the interaction is attractive, i.e. $a>0$. The ground state energy of the condensate is given by 
\bq \label{eq:GP}
E(a)= \inf_{u\in H^1(\R^2), \|u\|_{L^2} = 1} \cE_a(u).
\eq
Since $|\nabla u|\ge |\nabla |u||$ pointwise, in consideration of \eqref{eq:GP} we can always restrict to $u\ge 0$. 

The emergence of the Gross-Pitaevskii functional from Schr\"odinger quantum mechanics is well-known, see e.g. \cite{LewNamRou-15} and references therein. In principle, the Gross-Pitaevskii functional describes the energy per particles of a Bose gas of $N$ particles and $a=\lambda N$ where $\lambda$ is the scattering length of the pair interaction between particles. When $\lambda$ is fixed, the increase of $a$ is equivalent to the increase of the number of particles in the Bose gas, as considered in experiments \cite{BraSacTolHul-95,SacStoHul-98,KagMurShl-98}.

When $V=0$, by a simple scaling argument, it is easy to see that $E(a)=-\infty$ if $a>a^*$ and $E(a)=0$ if $a\le a^*$, where $a^*$ is the optimal constant in the Gagliardo-Nirenberg inequality:
\bq 
\label{eq:GN} 
a^* :=\inf_{u\in H^1(\R^2), \|u\|_{L^2} = 1}  \frac{\int_{\R^2} |\nabla u(x)|^2 \d x}{\frac{1}{2}\int_{\R^2}|u(x)|^4 \d x}.
\eq
It is well-known, see e.g. \cite{GidNiNir-81,Weinstein-83,MclSer-87},  that 
\bq \label{eq:a*-Q}
a^*=\int_{\R^2} |Q|^2 = \int_{\R^2}|\nabla Q|^2 = \frac{1}{2} \int_{\R^2}|Q|^4
\eq
where $Q$ is the positive solution to the nonlinear Schr\"odinger equation
\bq \label{eq:Q}
-\Delta Q + Q - Q^3 =0, \quad Q\in H^1(\R^2).
\eq 
Moreover, $Q$ is unique up to translations and it can be chosen to be radially symmetric decreasing. Thus when $V=0$, $E(a)$ has no minimizer if $a<a^*$ and all positive minimizers of $E(a^*)$, i.e. all positive optimizers of \eqref{eq:a*-Q}, are of the form $\beta Q_0(\beta x-x_0)$, where $Q_0=Q/\|Q\|_{L^2}$ and $\beta>0$, $x_0\in \R^2$ can be chosen arbitrary.

Recently, Guo and Seiringer \cite{GuoSei-14} showed that if $V$ is a trapping potential, i.e. $V(x)\ge 0$ and $\lim_{|x|\to \infty}V(x)=\infty$, then $E(a)$ has (at least) a minimizer $u_a$ for all $a<a^*$. Moreover, they prove that if
$$V (x) = h(x) \prod_{j=1}^J |x-x_j|^{p_j}, \quad 0<C^{-1}\le h(x) \le C, $$
then when $a\uparrow a^*$, up to subsequences of $\{u_a\}$, there exists $i_0\in \{1,2,...,J\}$ such that 
$$p_{i_0}=\max \{p_j: 1\le j\le J \} , \quad h(x_{i_0})=\min\{h(x_j):p_j=p_{i_0}\}$$
and  
$$
(a^*-a)^{1/(p_{i_0}+2)} u_a \Big(x_{i_0} + x (a^*-a)^{1/(p_{i_0}+2)}\Big) \to \beta Q_0(\beta x)
$$
strongly in $L^q(\R^2)$ for all $q\in [2,\infty)$, where
$$
\beta =  \left( \frac{p_{i_0} h(x_{i_0})}{2} \int_{\R^2} |x|^p |Q(x)|^2 \d x \right)^{1/(p+2)} .
$$
This result has been extended to ring-shaped trapping potentials \cite{GuoZenZho-16}, flat-well trapping potentials \cite{GuoWangZengZhou-15} and periodic potentials \cite{WanZha-16} (see also \cite{DenGuoLu-15} for a related work with inhomogeneous interactions). 

A common feature in the previous works \cite{GuoSei-14,GuoZenZho-16,GuoWangZengZhou-15,WanZha-16} is that $V\ge 0$ and the condensate collapses at one of the minimizers of $V$. In the present paper, we will consider the case when $V$ is unbounded from below, e.g. $V(x)=-|x|^{-1}$, which corresponds to the gravitational attraction or the Coulombic attraction. 
 This case is interesting because the instability is stronger, i.e. $E(a)\to -\infty$ when $a\uparrow a^*$, and the speed of the collapse is faster.  
\section{Main results}

First, we have a general result on the existence of minimizers. We will denote $V_{\pm}= \max\{\pm V,0\}$.

\begin{theorem} \label{thm1} Assume that $V\in L^1_{\rm loc}(\R^2)$, $\ess \inf V <0$ and $V_-\in L^p(\R^2)+L^q(\R^2)$ with some $1<p<q<\infty$. Then there exists a constant $a_*\in [0,a^*)$ such that for all $a\in [a_*,a^*)$, the minimization problem $E(a)$ in \eqref{eq:GP} has (at least) a minimizer. If $\inf \sigma(-\Delta+V)<0$, we can choose $a_*=0$. 
 \end{theorem}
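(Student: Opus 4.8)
The plan is to establish existence by the direct method of the calculus of variations, so the core issues are (i) showing $E(a) > -\infty$ for $a < a^*$, and (ii) recovering strong enough compactness of a minimizing sequence to pass to the limit. Let me think about both.

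For boundedness from below: we need to control $\int V_- |u|^2$ against $\int |\nabla u|^2$ and $\int |u|^4$. Since $V_- \in L^p + L^q$ with $1 < p < q < \infty$, write $V_- = W_1 + W_2$. By Hölder and Gagliardo–Nirenberg in $\mathbb{R}^2$ (which interpolates $L^r$ norms between $L^2$ and $\dot H^1$), for any $r \in (2,\infty)$ we have $\|u\|_{L^r}^2 \le C \|u\|_{L^2}^{2-\theta}\|\nabla u\|_{L^2}^\theta$ with the appropriate $\theta$, hence $\int W_i |u|^2 \le \|W_i\|_{L^{p_i}} \|u\|_{L^{2p_i'}}^2 \le \eta \|\nabla u\|_{L^2}^2 + C_\eta$ for the $L^p$ (resp. $L^q$) piece by Young's inequality, using $\|u\|_{L^2}=1$. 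So $\int V_- |u|^2$ is bounded by $\frac12\|\nabla u\|_{L^2}^2 + C$. Then by the Gagliardo–Nirenberg inequality \eqref{eq:GN}, $\frac{a}{2}\int |u|^4 \le \frac{a}{a^*}\|\nabla u\|_{L^2}^2$. Combining,
$$
\cE_a(u) \ge \Big(1 - \frac{a}{a^*} - \tfrac12\Big)\|\nabla u\|_{L^2}^2 + \int V_+|u|^2 - C.
$$
This is a problem: the coefficient $1 - a/a^* - 1/2$ is negative when $a/a^* > 1/2$. The fix is to use the $\eta$-flexibility more carefully together with a second use of Gagliardo–Nirenberg: interpolate the $L^{2p_i'}$ norms so that the power of $\|\nabla u\|_{L^2}$ that appears is strictly less than $2$, i.e. use $\theta_i < 1$ (possible since $p_i' < \infty$ means $2p_i' < \infty$), so that by Young's inequality $\int V_-|u|^2 \le \eta \|\nabla u\|_{L^2}^2 + C_\eta$ for \emph{every} $\eta > 0$. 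Taking $\eta$ small gives $\cE_a(u) \ge (1 - a/a^* - \eta)\|\nabla u\|_{L^2}^2 + \int V_+ |u|^2 - C_\eta$, and since $a < a^*$ we can pick $\eta < 1 - a/a^*$; hence $E(a) > -\infty$ and moreover any minimizing sequence is bounded in $H^1(\mathbb{R}^2)$ and has $\int V_+|u_n|^2$ bounded. This coercivity estimate is the main obstacle — getting the $\|\nabla u\|_{L^2}^2$ coefficient strictly positive for all $a < a^*$ simultaneously.

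Next, compactness. A minimizing sequence $\{u_n\}$ is bounded in $H^1$, so up to a subsequence $u_n \rightharpoonup u$ weakly in $H^1$, $u_n \to u$ strongly in $L^r_{\rm loc}$ for all $r \in [2,\infty)$, and a.e. The issue is possible loss of mass at infinity. Here is where the hypothesis $\ess\inf V < 0$ versus the stronger $\inf\sigma(-\Delta + V) < 0$ splits the argument. In the general case we only claim existence for $a$ close to $a^*$, i.e. for $a \ge a_*$ with some $a_* \in [0,a^*)$: the idea is that as $a \uparrow a^*$ the negativity of $V$ forces $E(a) \to -\infty$ (since one can test with $\beta Q_0(\beta(x - x_0))$ near a point where $V < 0$ and let $\beta$ be large — actually one should test near where $V$ is very negative, exploiting $\ess\inf V < 0$), so for $a$ large enough $E(a) < 0$; this strict negativity, compared with the ``energy at infinity'' which is $\ge 0$ (the problem with $V$ replaced by $0$ has $E \ge 0$ for $a \le a^*$), rules out dichotomy/vanishing by a concentration-compactness or binding argument, forcing $\|u\|_{L^2} = 1$ and hence strong $L^2$ convergence; combined with Brezis–Lieb and weak lower semicontinuity of $\int|\nabla u|^2$ and Fatou for $\int V_+|u|^2$, we get $\cE_a(u) \le \liminf \cE_a(u_n) = E(a)$, so $u$ is a minimizer. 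One needs to define $a_*$ as the infimum of those $a$ for which $E(a) < \inf_{\text{at infinity}} = 0$; continuity/monotonicity of $a \mapsto E(a)$ then gives the stated interval $[a_*, a^*)$.

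Finally, when $\inf\sigma(-\Delta + V) < 0$, there is a trial function $\varphi \in H^1$, $\|\varphi\|_{L^2}=1$, with $\int(|\nabla\varphi|^2 + V|\varphi|^2) < 0$; then $\cE_0(\varphi) < 0$, so $E(0) < 0$, and since $a \mapsto E(a)$ is nonincreasing, $E(a) < 0 = (\text{energy at infinity})$ for \emph{all} $a \in [0,a^*)$. Hence the binding inequality holds throughout and we may take $a_* = 0$. The remaining routine points — that the ``energy at infinity'' is $0$, that the translated/split pieces of $u_n$ genuinely decouple in the energy because $V_-$ vanishes at infinity in the $L^p + L^q$ sense, and the Brezis–Lieb bookkeeping — I would carry out by standard concentration-compactness; I expect no surprises there. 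The one genuinely delicate computation is the $\eta$-small coercivity bound in the first paragraph, and secondarily verifying that $E(a) < 0$ for $a$ near $a^*$ using only $\ess\inf V < 0$ rather than a pointwise lower bound on $V$.
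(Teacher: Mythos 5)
Your outline is correct and shares the paper's three key ingredients: the $\eta$-small form bound $\int V_-|u|^2\le \eta\|\nabla u\|_{L^2}^2+C_\eta$ (equivalently $-\eta\Delta - V_-\ge -C_\eta$), which together with \eqref{eq:GN} gives coercivity for every $a<a^*$; concentrated trial functions $\ell Q_0(\ell(x-x_0))$ at an a.e.\ point with $V(x_0)<0$ to get $E(a)<0$ for $a$ near $a^*$ (and $E(a)\le\inf\sigma(-\Delta+V)<0$ for all $a$ in the second case); and the strict negativity $E(a)<0$ played against the nonnegative ``energy at infinity'' to recover compactness. Where you diverge is the last step: you invoke Lions-type concentration--compactness, while the paper avoids that machinery. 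It takes the weak $H^1$ limit $u$ of a minimizing sequence and shows directly---using that $|u_n|^2\rightharpoonup|u|^2$ weakly in every $L^s$, $1<s<\infty$, paired against $V_-\in L^p+L^q$ (so the $V_-$ term converges with no loss and no translations are needed), Fatou for $V_+$, and Brezis--Lieb for the quartic term---that $\cE_a(u_n)\ge\cE_a(u)+\int|\nabla(u_n-u)|^2-\frac a2\int|u_n-u|^4+o(1)$; the remainder is nonnegative by \eqref{eq:GN} because $\|u_n-u\|_{L^2}^2\to 1-\|u\|_{L^2}^2\le1$, whence $\cE_a(u)\le E(a)$. Then the identity $\cE_a(u)=\|u\|_{L^2}^2\,\cE_a(u/\|u\|_{L^2})+\frac a2(\|u\|_{L^2}^{-2}-1)\int|u|^4\ge\|u\|_{L^2}^2E(a)$ combined with $E(a)<0$ forces $\|u\|_{L^2}=1$. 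That identity is precisely the rigorous content of the ``binding argument'' you left implicit; if you do go the concentration--compactness route you must prove the strict subadditivity $E(a,\lambda)\ge\lambda E(a)>E(a)$ for $\lambda<1$, which is the same computation, so the paper's direct version costs nothing and spares you the dichotomy analysis. One factual slip: under the hypothesis $\ess\inf V<0$ alone, $E(a)$ does \emph{not} tend to $-\infty$ as $a\uparrow a^*$; Lemma \ref{lem:ea->0} gives $E(a)\to\ess\inf V$, which is finite whenever $V$ is bounded below. What you need, and what the trial functions actually deliver, is only $\limsup_{a\uparrow a^*}E(a)\le\ess\inf V<0$. Also, defining $a_*$ as the infimum of $\{a:E(a)<0\}$ may leave $E(a_*)=0$; simply take any slightly larger value so that $E(a)<0$ on all of $[a_*,a^*)$.
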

 
Note that when $a=0$, the existence of minimizers for $E(a)$ reduces to the existence of bound states for $-\Delta+V$. Therefore, the condition $\inf \sigma(-\Delta+V)<0$ emerges naturally. This condition holds if $V\le 0$ (and $\ess \inf V<0$) or $\int V<0$ (and $V$ decays fast enough);  see e.g. \cite{Simon-76}.  

One interesting point in Theorem \ref{thm1} is the effect of the nonlinear interaction term: even if $-\Delta+V$ does not have a bound state, as soon as $\ess \inf V <0$, $E(a)$ still has a minimizer if $a$ is close to $a^*$ sufficiently. The reason is that most of the kinetic energy is canceled by the interaction energy, and hence the condensate is trapped by any small negative well. 

In Theorem \ref{thm1} we do not assume that $V(x)\to \infty$ as $|x| \to \infty$ (but it covers this case as well). Therefore, the compactness of minimizing sequence is not clear as a-priori, and the proof is more difficult than that of the trapping case. 

To describe precisely the blow-up behavior of the minimizers when $a\uparrow a^*$, we will consider the potentials of the form
\bq \label{eq:AS-V}
V(x)= g(x)+ h(x) \sum_{j=1}^J  |x-x_j|^{- p_j}, \quad 0<p_j<2,
\eq
where $0\le g\in L^{\infty}_{\rm loc}(\R^2)$, $h\in L^{\infty}(\R^2)$ such that $h(x_j):=\lim_{x\to x_j} h(x)$ exists and $\min_{1\le j\le J} h(x_j)<0$. Here $J\in \mathbb{N}$ is arbitrary and $\{x_j\}$ are $J$ different points. Since 
$$|x|^{-s} \in L^{1/s+1/2}(\R^2) + L^{4/s} (\R^2),\quad \forall 0<s<2,$$
by Theorem \ref{thm1}, there exists $a_*\in [0,a^*)$ such that $E(a)$ has (at least) a minimizer $u_a$ for all $a\in [a_*,a^*)$. The behavior of $E(a)$ and $u_a$ when $a\uparrow a^*$ is given below.
\begin{theorem} \label{thm2} Let $V$ as in \eqref{eq:AS-V}. Then 
\begin{align*}
 \lim_{a\uparrow a^*}\frac{E(a)}{(a^*-a)^{-p/(2-p)}} = \left( h_0 \int_{\R^2}  \frac{|Q_0(x)|^2}{|x|^{p}} \d x \right)^{2/(2-p)} (a^*)^{p/(2-p)} \left( (p/2)^{2/(2-p)}- (p/2)^{p/(2-p)}\right)
\end{align*}
where $p:=\max\{p_j: h(x_j)<0\}$ and $h_0:= - \min \{h(x_j):p_j=p\}>0.$

Moreover, for every sequence $a_n\uparrow a^*$, there exists a subsequence (still denoted by $a_n$) and $i_{0}\in \{1,2,...,J\}$ such that $p_{i_0}=p$, $h(x_{i_0})=-h_0$ and
$$
\lim_{n\to \infty} (a^*-a_n)^{1/(2-p)} u_{a_n} \Big( x_{i_0} + x(a^*-a_n)^{1/(2-p)} \Big) = \beta Q_0(\beta x)
$$
strongly in $H^1(\R^2)$, where
\bq \label{eq:beta}
\beta= \left( \frac{a^* h_0 p }{2} \int_{\R^2}  \frac{|Q_0(x)|^2}{|x|^{p}} \d x \right)^{1/(2-p)}.
\eq
If the choice of $i_0$ is unique, then we have the convergence for the whole family $\{u_a\}$.  
\end{theorem}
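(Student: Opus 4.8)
The plan is to follow the now-standard blow-up strategy (as in \cite{GuoSei-14}), but with the key twist that here $E(a)\to -\infty$, so the rescaling is dictated by the \emph{singular} term rather than by a minimum of $V$. First I would establish two-sided bounds on $E(a)$. For the upper bound, insert the trial state $u(x)=\tau Q_0(\tau(x-x_{i_0}))$ into $\cE_a$, where $x_{i_0}$ realises the maximal exponent $p$ with the most negative coefficient. Using \eqref{eq:a*-Q}, the kinetic-minus-interaction part contributes $(a^*-a)\tau^2/a^*\cdot(\text{const})$ wait --- more precisely $\tau^2\int|\nabla Q_0|^2 - (a\tau^2/2)\int|Q_0|^4 = \tau^2 a^{-1}_*(a^*-a)$, hmm, let me just say it contributes a term $\sim (a^*-a)\tau^2$, while the potential term is dominated by $h(x_{i_0})\tau^p\int |x|^{-p}|Q_0|^2 = -h_0\tau^p\int|x|^{-p}|Q_0|^2$ plus lower-order ($g$ is locally bounded and the other singularities are at a positive distance). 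Optimising the resulting expression $c_1(a^*-a)\tau^2 - c_2\tau^p$ over $\tau>0$ gives $\tau\sim(a^*-a)^{-1/(2-p)}$ and the claimed leading constant; the algebraic identity $(p/2)^{2/(2-p)}-(p/2)^{p/(2-p)}$ appears exactly from this one-variable optimisation.

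For the matching lower bound and the convergence of minimizers, let $u_a$ be a minimizer and set $\eps_a := \big(\int_{\R^2}|\nabla u_a|^2\big)^{-1/2}$ (so $\eps_a\to 0$), and define $w_a(x) := \eps_a u_a(x_{i_0}+\eps_a x)$ along a subsequence where the blow-up point is identified. The core estimates are: (i) from the Gagliardo--Nirenberg inequality \eqref{eq:GN} and $a<a^*$, $\cE_a(u_a)\ge (1-a/a^*)\int|\nabla u_a|^2 + \int V_+|u_a|^2 - \int V_-|u_a|^2$, and one shows $\int V_-|u_a|^2$ cannot beat the kinetic term unless $u_a$ concentrates; (ii) a concentration-compactness / mass-localisation argument forces the blow-up to occur at a single point, which must be one of the $x_j$ with $p_j=p$ and $h(x_j)=-h_0$, since any other location would give a strictly worse energy (the singular term is what drives $E(a)\to-\infty$, so the condensate is pinned to the strongest singularity). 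Combined with the energy lower bound, this pins down both $\eps_a\sim (a^*-a)^{1/(2-p)}$ and the fact that $w_a$ is (up to scaling by some $\beta_a\to\beta$ and translation) an almost-minimiser of the Gagliardo--Nirenberg quotient, hence converges to an optimiser, i.e. to $\beta Q_0(\beta\,\cdot)$ by the uniqueness of $Q$. Upgrading weak $H^1$ convergence to strong follows from convergence of the $H^1$ norms, which is forced by the matched energy asymptotics. Finally, if $i_0$ is unique the whole family converges by a standard subsequence-uniqueness argument.

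The main obstacle I anticipate is step (ii): proving that the mass of $u_a$ genuinely concentrates at a single point \emph{and} that this point is one of the prescribed singularities. The difficulty is precisely the one flagged in the remarks after Theorem \ref{thm1} --- $V$ need not be trapping, so there is no a priori compactness of minimizing sequences, and the negative part $V_-$ is only in $L^p+L^q$, so one must carefully control $\int V_-|u_a|^2$ in terms of the kinetic energy via interpolation (Hölder plus Gagliardo--Nirenberg), showing that away from the singular set this quantity is $o(\text{kinetic})$ while near a singularity it scales exactly like $\eps_a^{-p}$. A secondary technical point is justifying that the error from $g$ and from the non-dominant singular terms is genuinely lower order after rescaling; this should follow from $g\in L^\infty_{\rm loc}$, $h\in L^\infty$, and the fact that $Q_0$ has exponential decay, so the rescaled mass escaping any fixed neighbourhood of $x_{i_0}$ is negligible. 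Once concentration at a prescribed singularity is in hand, the rest is the usual rigidity argument for the Gagliardo--Nirenberg inequality.
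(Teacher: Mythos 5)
Your overall strategy --- trial-function upper bound, matching lower bound, blow-up rescaling, identification of the limit as a Gagliardo--Nirenberg optimizer, and energy matching to extract the constant $(p/2)^{2/(2-p)}-(p/2)^{p/(2-p)}$ --- is exactly the paper's. Two caveats on the parts you sketch. For the upper bound, the bare trial state $\tau Q_0(\tau(x-x_{i_0}))$ is not quite admissible: $g$ is only in $L^\infty_{\rm loc}$, so $\int g|u|^2$ need not be finite; you must insert a cutoff supported near $x_{i_0}$ (the exponential decay of $Q_0$ then makes the cutoff harmless). For the obstacle you flag in step (ii), no concentration-compactness is needed: the sharp upper bound of Lemma \ref{lem:Ea-up} together with \eqref{eq:GN} gives $\int V|u_a|^2\le E(a)\le -C^{-1}(a^*-a)^{-p/(2-p)}$, and since $V\ge -C\sum_{h(x_j)<0}|x-x_j|^{-p_j}-C$ pointwise, a pigeonhole over the finitely many singularities yields some $x_{i_0}$ with $(a^*-a)^{p/(2-p)}\int|u_a|^2|x-x_{i_0}|^{-p_{i_0}}\ge C^{-1}$; after rescaling by the \emph{explicit} length $\eps_a=(a^*-a)^{1/(2-p)}$ (rather than your kinetic-energy normalisation, which would additionally require a lower bound on $\int|\nabla u_a|^2$) this forces $p_{i_0}=p$ and gives a nonzero weak limit in one stroke. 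Note also that this step only yields $h(x_{i_0})<0$; the equality $h(x_{i_0})=-h_0$ comes out only at the very end, from the energy matching, not from the concentration argument.

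The one genuine gap is the phrase ``converges to an optimiser, i.e.\ to $\beta Q_0(\beta\,\cdot)$ by the uniqueness of $Q$.'' Uniqueness holds only up to translation, so the strong $H^1$ limit is a priori $w(x)=\beta Q_0(\beta x-y_0)$ with $y_0$ undetermined, and since the rescaling is centred at the fixed singularity $x_{i_0}$ there is no freedom to recentre it away. The paper rules out $y_0\ne 0$ by the strict rearrangement inequality
$$\int_{\R^2}\frac{|Q_0(x-y_0)|^2}{|x|^{p}}\,\d x<\int_{\R^2}\frac{|Q_0(x)|^2}{|x|^{p}}\,\d x\quad\text{for } y_0\ne 0,$$
which would make the sharp lower bound for $\eps_a^pE(a)$ strictly exceed the upper bound of Lemma \ref{lem:Ea-up}; the same comparison simultaneously fixes $\beta$ and $h(x_{i_0})=-h_0$. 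With the cutoff, the pigeonhole localisation, and this rearrangement step supplied, your plan coincides with the paper's proof.
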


Our method to prove Theorem \ref{thm2} is somewhat different from the approach in the previous works \cite{GuoSei-14,GuoZenZho-16,GuoWangZengZhou-15,WanZha-16}. While the main ideas are similar, our method seems more direct because it is based only on energy estimates. More precisely, in the existing works, the property of $u_a$ is obtained by analyzing the Euler-Lagrange equation associated with the variational problem $E(a)$. In our approach, we will prove that after necessary modifications, $u_a$ converges in $H^1(\R^2)$ to an optimizer of the Gagliardo-Nirenberg inequality \eqref{eq:GN}, and then determine exactly the limit by matching the energy. In fact, we will see from the proof that
\begin{align*}
 \lim_{a\uparrow a^*}\frac{E(a)}{(a^*-a)^{-p/(2-p)}} = \inf_{\lambda>0} \left[ \frac{\lambda^2}{a^*} - \lambda^p h_0 \int_{\R^2}  \frac{|Q_0(x)|^2}{|x|^{p}} \d x \right]
\end{align*}
and $\beta$ in \eqref{eq:beta} is the optimal value $\lambda$ for the right hand side.

We will prove Theorem \ref{thm1} in Section \ref{sec:thm1} and prove Theorem \ref{thm2} in Section \ref{sec:thm2}.


\section{Existence of minimizers} \label{sec:thm1}

In this section we prove Theorem \ref{thm1}. As a preliminary step, we have

\begin{lemma} \label{lem:ea->0} If $V\in L^1_{\rm loc}(\R^2)$, then 
$$\lim_{a\uparrow a^*}E(a) =E(a^*) = \ess \inf V.$$
Moreover, if $V\not\equiv {\rm constant}$, then $E(a^*)$ has no mimimizer. 
\end{lemma}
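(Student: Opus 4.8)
The plan is to prove the three claims in order: the limit $\lim_{a\uparrow a^*}E(a)=E(a^*)$, the identification $E(a^*)=\ess\inf V$, and the non-existence of a minimizer for $E(a^*)$ when $V$ is non-constant.

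First I would establish $E(a^*)=\ess\inf V$. The lower bound is immediate from the Gagliardo--Nirenberg inequality \eqref{eq:GN}: for any $\|u\|_{L^2}=1$ we have $\int|\nabla u|^2 \ge \frac{a^*}{2}\int|u|^4$, hence $\cE_{a^*}(u)=\int|\nabla u|^2-\frac{a^*}{2}\int|u|^4+\int V|u|^2 \ge \int V|u|^2 \ge \ess\inf V$. For the matching upper bound, pick a point $x_0$ where $V$ is "locally small" — more precisely, for any $\delta>0$ choose a set of positive measure on which $V \le \ess\inf V + \delta$ and a point $x_0$ of density of that set — and use trial functions of the form $u_\beta(x)=\beta Q_0(\beta(x-x_0))$, which are exact optimizers of \eqref{eq:GN}, so that $\cE_{a^*}(u_\beta)=\int V(x)|u_\beta(x)|^2\,\d x$. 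As $\beta\to\infty$, $|u_\beta|^2$ concentrates at $x_0$; since $V\in L^1_{\rm loc}$ I would use that $\int_{B_r(x_0)} V \to (\ess\inf V)\,|B_r(x_0)|$ type control (or more carefully, that the concentrating mass sees an average of $V$ near $x_0$ which can be made $\le \ess\inf V+2\delta$) to get $\cE_{a^*}(u_\beta)\le \ess\inf V+o(1)$. Letting $\beta\to\infty$ and then $\delta\to 0$ gives $E(a^*)\le\ess\inf V$. The delicate point here is that $V$ is only in $L^1_{\rm loc}$, so I cannot use pointwise values; the argument must be phrased in terms of Lebesgue points / density points of superlevel sets, and I would smooth $Q_0$ against the indicator of a small ball if needed to keep the estimate clean.

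Next, for $\lim_{a\uparrow a^*}E(a)=E(a^*)$: monotonicity gives $E(a)\ge E(a^*)$ for $a\le a^*$ (since $-\frac{a}{2}|u|^4 \ge -\frac{a^*}{2}|u|^4$), so $\liminf_{a\uparrow a^*}E(a)\ge E(a^*)$. For the reverse, use the same concentrating trial functions $u_\beta$: for fixed $\beta$, $\cE_a(u_\beta)=\int|\nabla u_\beta|^2 - \frac{a}{2}\int|u_\beta|^4 + \int V|u_\beta|^2 = \frac{a^*-a}{2}\int|u_\beta|^4 + \int V|u_\beta|^2$. The first term is $\frac{a^*-a}{2}\beta^2\int|Q_0|^4$, which $\to 0$ as $a\uparrow a^*$ for fixed $\beta$; so $\limsup_{a\uparrow a^*}E(a)\le \int V|u_\beta|^2$ for every $\beta$, and taking $\beta\to\infty$ (as in the previous paragraph) gives $\limsup_{a\uparrow a^*}E(a)\le\ess\inf V=E(a^*)$.

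Finally, the non-existence claim: suppose $u$ were a minimizer of $E(a^*)$, so $\cE_{a^*}(u)=\ess\inf V$. From the chain of inequalities $\ess\inf V = \cE_{a^*}(u) = \big(\int|\nabla u|^2 - \frac{a^*}{2}\int|u|^4\big) + \int V|u|^2 \ge 0 + \ess\inf V$, equality forces both $\int|\nabla u|^2=\frac{a^*}{2}\int|u|^4$ (so $u$ is an optimizer of \eqref{eq:GN}, hence $u=\beta Q_0(\beta(\cdot-x_0))$ for some $\beta>0$, $x_0$ — in particular $u>0$ a.e. and is continuous) and $\int (V-\ess\inf V)|u|^2 = 0$. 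Since $V-\ess\inf V\ge 0$ and $|u|^2>0$ a.e., this forces $V=\ess\inf V$ a.e., i.e. $V\equiv{\rm constant}$, contradicting the hypothesis. I expect the main obstacle to be the first paragraph — making the concentration estimate rigorous for merely $L^1_{\rm loc}$ potentials, i.e. controlling $\int V|u_\beta|^2$ as $\beta\to\infty$ — since this is where the low regularity of $V$ actually bites; everything else is soft (monotonicity, Gagliardo--Nirenberg, and the rigidity of its optimizers).
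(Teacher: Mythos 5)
Your overall strategy is the same as the paper's: concentrate Gagliardo--Nirenberg optimizers at a near-minimum point of $V$, handle the a.e.-defined potential via Lebesgue/density points, get the lower bound from \eqref{eq:GN}, and derive non-existence from the rigidity of GN optimizers. Your decoupling of the limits ($a\uparrow a^*$ first for fixed $\beta$, then $\beta\to\infty$) is a legitimate alternative to the paper's coupled choice $\ell=(a^*-a)^{-1/4}$, and your non-existence argument is actually more complete than the paper's one-line version: the paper asserts $\int V|u|^2>\ess\inf V$ for all normalized $u$, which is not literally true if $V$ attains its essential infimum on a set of positive measure; your route through ``equality forces $u$ to be a GN optimizer, hence $u>0$ a.e., hence $V=\ess\inf V$ a.e.'' is the correct way to close that case.

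There is one concrete gap in the trial-function step. With $u_\beta(x)=\beta Q_0(\beta(x-x_0))$ and $V$ merely in $L^1_{\rm loc}(\R^2)$, the quantity $\int V|u_\beta|^2$ need not be finite or even well defined: $Q_0$ decays only like $|x|^{-1/2}e^{-|x|}$, so if $V_+$ grows superexponentially at infinity (which $L^1_{\rm loc}$ permits) you get $\cE_{a^*}(u_\beta)=+\infty$ and the trial function yields nothing. Your proposed fix --- ``smooth $Q_0$ against the indicator of a small ball'' --- does not repair this, since convolution does not produce compact support. The paper's remedy is to \emph{multiply} by a cutoff $\varphi\in C_c^\infty$ equal to $1$ near $x_0$, so that $V|\varphi(\cdot-x_0)|^2$ is integrable; the price is that $\varphi(\cdot-x_0)Q_0(\ell(\cdot-x_0))\ell$ is no longer an exact GN optimizer, but the exponential decay of $Q_0$ and $\nabla Q_0$ makes the resulting error $o(\ell^{-\infty})$, which is negligible against the $\ell^2(a^*-a)$ term. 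With that modification the rest of your argument (Lebesgue-point control of $\int V|u_\ell|^2\to V(x_0)$ for a.e.\ $x_0$, then taking the essential infimum over $x_0$) goes through as in the paper.
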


\begin{proof} As in \cite{GuoSei-14} we consider the trial function
$$
u_\ell(x)=A_\ell \varphi(x-x_0) Q_0(\ell(x-x_0)) \ell
$$
where $x_0\in \R^2$, $0\le \varphi \in C_c^\infty (\R^2)$ with $\varphi(x)=1$ for $|x|\le 1$, and $A_\ell>0$ is a normalizing factor to make $\|u_\ell\|_{L^2}=1$. Since $Q_0,$ $|\nabla Q_0|=O(|x|^{-\frac{1}{2}}e^{-|x|})$ as $|x|\to\infty$ (see \cite[Proposition 4.1]{GidNiNir-81}), we have
\begin{align*}
A_\ell^{-2}&=  \int_{\R^2} \varphi(x-x_0)^2 |Q_0(\ell(x-x_0))|^2 \ell^2 \d x\\
&=\int_{\R^2} \varphi(x/\ell)^2 |Q_0(x)|^2 \d x = 1 + o(\ell^{-\infty})
\end{align*}
and 
\begin{align*}
\int_{\R^2} |\nabla u_\ell|^2 - \frac{a}{2}\int_{\R^2}|u_\ell|^4 &= \ell^2 \left( \int_{\R^2} |\nabla Q_0|^2 - \frac{a}{2}\int_{\R^2}|Q_0|^4  \right) +o(1)_{\ell\to \infty} \\
& =  \frac{\ell^2(a^*-a)}{2}  \int_{\R^2}|Q_0|^4 +o(1)_{\ell \to \infty}.
\end{align*}
Moreover, since $V(x) |\varphi(x-x_0)|^2$ is integrable and $|Q_0(\ell(x-x_0))|^2 \ell^2$ converges weakly to the Dirac-delta function at $x_0$ when $\ell\to \infty$, we have 
\begin{align*}
\int_{\R^2} V |u_\ell|^2  = A_\ell^2  \int_{\R^2} V(x) |\varphi(x-x_0)|^2  |Q_0(\ell(x-x_0))|^2 \ell^2 \d x \to V(x_0) 
\end{align*}
for a.e. $x_0\in \R^2$. Thus in summary,
$$
E(a) \le \cE_a(u_\ell) \le \frac{\ell^2(a^*-a)}{2}  \int_{\R^2}|Q_0|^4 + V(x_0)+ o(1)_{\ell \to \infty} 
$$
for a.e. $x_0\in \R^2$. By choosing $\ell=(a^*-a)^{-1/4}$ and optimizing over $x_0$, we obtain
$$\limsup_{a\uparrow a^*}E(a) \le \ess \inf V.$$
On the other hand, by the Gagliardo-Nirenberg inequality \eqref{eq:GN}, $E(a)\ge E(a^*)\ge \ess\inf V$. Thus
$$ \liminf_{a\uparrow a^*}E(a)=E(a^*)= \ess \inf V.$$
Finally, if $V\not\equiv {\rm constant}$, then by \eqref{eq:GN} again,
$$\cE_{a^*}(u) \ge \int_{\R^2} V|u|^2 > \ess\inf V, \forall \|u\|_{L^2}=1.$$
Thus $E(a^*)$ has no mimimizer. 
\end{proof}

\begin{proof}[Proof of Theorem \ref{thm1}] Let $0\le a<a^*$. Since $V_-\in L^p(\R^2)+L^q(\R^2)$ with $1<p<q<\infty$, by Sobolev's inequality,
$$-\eps \Delta + V \ge -C_\eps, \quad \forall \eps>0.$$
This and \eqref{eq:GN} imply 
$$
\cE_a(u) \ge \left(1-\frac{a}{a^*}-\eps \right) \int_{\R^2} |\nabla u|^2 - C_\eps, \quad \forall \eps>0.
$$
Thus $E(a)>-\infty$. Moreover, if $\{u_n\}$ is a minimizing sequence  for $E(a)$, then it is bounded in $H^1(\R^2)$. By Sobolev's embedding, after passing to a subsequence if necessary, we can assume that $u_n$ converges to a function $u$ weakly in $H^1(\R^2)$ and pointwise.  

We will show that if $E(a)<0$, then $u$ is a minimizer for $E(a)$. Note that by Lemma \ref{lem:ea->0} and assumption $\ess\inf V<0$, there exists $a_*<a^*$ such that $E(a)<0$ for all $a\in (a_*,a^*)$. Moreover, if $\inf \sigma(-\Delta+V)<0$, then by the variational principle, $E(a)\le \inf\sigma(-\Delta+V)<0$ for all $a\in (0,a^*]$. 

Since $\nabla u_n\wto \nabla u$ weakly in $L^2(\R^2)$, we have 
\begin{align*}
\int_{\R^2} |\nabla u_n|^2 =\int_{\R^2} |\nabla u|^2 +  \int_{\R^2} |\nabla (u-u_n)|^2 + o(1)_{n\to \infty}.
\end{align*}
Moreover, since $|u_n|^2\wto |u|^2$ weakly in $L^s(\R^2)$ for all $1<s<\infty$,
$$
\int_{\R^2} V_- |u_n|^2 = \int_{\R^2} V_- |u|^2 + o(1)_{n\to \infty}
$$
On the other hand, from the pointwise convergence $u_n\to u$,  we get
$$
\int_{\R^2} V_+ |u_n|^2 \ge \int_{\R^2} V_+ |u|^2 + o(1)_{n\to \infty}
$$
by Fatou's lemma and
$$
\int_{\R^2} |u_n|^4 =\int_{\R^2} |u|^4 + \int_{\R^2} |u-u_n|^4 +  o(1)_{n\to \infty}
$$
by Brezis-Lieb's refinement of Fatou lemma \cite{BreLie-83}.  In summary, we obtain
\bq \label{eq:existence-k}
\cE_a(u_n) \ge \cE_a(u) +  \int_{\R^2} |\nabla (u-u_n)|^2 -\frac{a}{2} \int_{\R^2} |u-u_n|^4 + o(1)_{n\to \infty}.
\eq
Since $\{u_n\}$ is a mimimizing sequence for $E(a)$ and
$$
 \int_{\R^2} |\nabla (u-u_n)|^2 -\frac{a}{2} \int_{\R^2} |u-u_n|^4 \ge \left(1-\frac{a}{a^*} \right)\int_{\R^2} |\nabla (u-u_n)|^2
$$
by the Gagliardo-Nirenberg inequality \eqref{eq:GN}, we conclude that
\bq \label{eq:existence-k2}
E(a)=\cE_a(u).
\eq

It remains to show that $\|u\|_{L^2}=1$. As a-priori, we have $\|u\|_{L^2}\le 1$ since $\|u_n\|_{L^2}=1$ and $u_n\wto u$ weakly in $L^2(\R^2)$. Moreover, $u\ne 0$ since $E(a)<0$. Thus we can estimate 
\begin{align*}
E(a)=\cE_a(u) = \|u\|_{L^2}^2 \cE_a\left(\frac{u}{\|u\|_{L^2}}\right) + \frac{a}{2} \left(  \|u\|_{L^2}^{-2}-1 \right) \int_{\R^2} |u|^4 
 \ge \|u\|_{L^2}^2 E(a) .
\end{align*}
Since $E(a)<0$, we conclude that $\|u\|_{L^2}=1$. Thus $u$ is a minimizer for $E(a)$.
\end{proof}

\begin{remark} From the above proof, we also obtain the strong convergence of the minimizing sequence in $H^1(\R^2)$. Indeed, since $u_n\wto u$ weakly in $H^1(\R^2)$ and $\|u\|_{L^2}=1$, we obtain $u_n\to u$ strongly in $L^2(\R^2)$. Moreover, from \eqref{eq:existence-k} and \eqref{eq:existence-k2}, we have $\nabla u_n \to \nabla u$ strongly in $L^2(\R^2)$. Thus $u_n\to u$ strongly in $H^1(\R^2)$. 
\end{remark}


\section{Blow-up behavior}\label{sec:thm2}

In this section, we prove Theorem \ref{thm2}. Recall that
$$
V(x)= g(x)+ h(x) \sum_{j=1}^J  |x-x_j|^{- p_j}, \quad 0<p_j<2,
$$
and  
$$p:=\max\{p_j: h(x_j)<0\}, \quad h_0:= - \min \{h(x_j):p_j=p\}>0.$$

Note that $V$ satisfies the conditions in Theorem \ref{thm1} because 
$$|x|^{-s}= |x|^{-s} \chi_{\{|x|\le 1\}}  + |x|^{-s} \chi_{\{|x|\ge 1\} }  \in L^{1/s+1/2}(\R^2) + L^{4/s} (\R^2),\quad \forall 0<s<2.$$
In the following, we always consider the case when $a<a^*$ and $a$ is sufficiently close to $a^*$.  

First, we prove the sharp upper bound on $E(a)$ when $a\uparrow a^*$. 
\begin{lemma} \label{lem:Ea-up} We have
\begin{align*}
\limsup_{a\uparrow a^*}\frac{E(a)}{(a^*-a)^{-p/(2-p)}} \le \inf_{\lambda>0} \left[ \frac{\lambda^2}{a^*} - \lambda^p h_0 \int_{\R^2}  \frac{|Q_0(x)|^2}{|x|^{p}} \d x \right] .
\end{align*}
\end{lemma}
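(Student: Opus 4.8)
The plan is to test $\cE_a$ against a trial state concentrated, at a suitable $a$-dependent scale, near one of the deepest singular points of $V$, and then to optimize over the scale. Fix $i_0$ with $p_{i_0}=p$ and $h(x_{i_0})=-h_0$ (such an $i_0$ exists by the definitions of $p$ and $h_0$), and fix $0\le\varphi\in C_c^\infty(\R^2)$ with $\varphi\equiv1$ near $0$ and with $\supp\varphi$ so small that $x_{i_0}+\supp\varphi$ contains none of the other points $x_j$, $j\ne i_0$. For $\ell>0$ set, in the spirit of the proof of Lemma~\ref{lem:ea->0},
$$
u_\ell(x)=A_\ell\,\varphi(x-x_{i_0})\,Q_0(\ell(x-x_{i_0}))\,\ell ,
$$
with $A_\ell>0$ chosen so that $\|u_\ell\|_{L^2}=1$; the exponential decay of $Q_0$ gives $A_\ell=1+o(\ell^{-\infty})$, and $u_\ell$ is admissible in \eqref{eq:GP}.

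The next step is to compute $\cE_a(u_\ell)$ term by term. Exactly as in Lemma~\ref{lem:ea->0}, together with $\int|\nabla Q_0|^2=1$ and $\tfrac12\int|Q_0|^4=1/a^*$ (which follow from \eqref{eq:a*-Q} and $Q_0=Q/\|Q\|_{L^2}$), one gets
$$
\int_{\R^2}|\nabla u_\ell|^2-\frac{a}{2}\int_{\R^2}|u_\ell|^4=\frac{\ell^2(a^*-a)}{a^*}+o(1)_{\ell\to\infty}.
$$
For the potential term, on $x_{i_0}+\supp\varphi$ write $V(x)=W(x)+h(x)|x-x_{i_0}|^{-p}$ with $W(x)=g(x)+\sum_{j\ne i_0}h(x)|x-x_j|^{-p_j}$; since $g\in L^\infty_{\rm loc}$, $h\in L^\infty$ and $x_{i_0}+\supp\varphi$ avoids the $x_j$ with $j\ne i_0$, the function $W$ is bounded on that compact set, and because $\|u_\ell\|_{L^2}=1$ it contributes only $O(1)$ to $\int V|u_\ell|^2$. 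For the singular part, the substitution $y=\ell(x-x_{i_0})$ gives
$$
\int_{\R^2}h(x)\,|x-x_{i_0}|^{-p}\,|u_\ell(x)|^2\,\d x=A_\ell^2\,\ell^{p}\int_{\R^2}h(x_{i_0}+y/\ell)\,\varphi(y/\ell)^2\,\frac{|Q_0(y)|^2}{|y|^{p}}\,\d y ,
$$
and since $|y|^{-p}|Q_0(y)|^2\in L^1(\R^2)$ (as $p<2$ near $0$ and $Q_0$ decays exponentially), $h$ is bounded, $h(x_{i_0}+y/\ell)\to h(x_{i_0})=-h_0$ and $\varphi(y/\ell)\to1$ pointwise, dominated convergence turns the right-hand side into $-h_0\,\ell^{p}\int_{\R^2}|Q_0|^2/|x|^{p}+o(\ell^{p})_{\ell\to\infty}$. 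Since $p>0$, the $O(1)$ term above is $o(\ell^p)$, so in total
$$
\cE_a(u_\ell)=\frac{\ell^2(a^*-a)}{a^*}-h_0\,\ell^{p}\int_{\R^2}\frac{|Q_0|^2}{|x|^{p}}+o(\ell^{p})_{\ell\to\infty}.
$$

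To finish, fix $\lambda>0$ and take $\ell=\lambda(a^*-a)^{-1/(2-p)}$, which tends to $\infty$ as $a\uparrow a^*$; then $\ell^2(a^*-a)=\lambda^2(a^*-a)^{-p/(2-p)}$ and $\ell^{p}=\lambda^{p}(a^*-a)^{-p/(2-p)}$, so
$$
E(a)\le\cE_a(u_\ell)=(a^*-a)^{-p/(2-p)}\Big(\frac{\lambda^2}{a^*}-\lambda^{p}h_0\int_{\R^2}\frac{|Q_0|^2}{|x|^{p}}\Big)+o\big((a^*-a)^{-p/(2-p)}\big).
$$
Dividing by $(a^*-a)^{-p/(2-p)}$ and letting $a\uparrow a^*$ bounds $\limsup_{a\uparrow a^*}E(a)(a^*-a)^{p/(2-p)}$ by the quantity in parentheses for that particular $\lambda$; since $\lambda>0$ was arbitrary, taking the infimum over $\lambda$ gives the claim. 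The delicate point is the potential estimate: $\supp\varphi$ must be chosen small enough to isolate $x_{i_0}$ from the other singularities and from regions where $g$ is large, and one must justify passing to the limit inside the rescaled singular integral — which is exactly where the hypotheses $0<p<2$, $g\in L^\infty_{\rm loc}$, $h\in L^\infty$ and the existence of $h(x_{i_0})$ are used.
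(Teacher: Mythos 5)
Your proof is correct and follows essentially the same route as the paper: the same trial function $u_\ell=A_\ell\varphi(\cdot-x_{i_0})Q_0(\ell(\cdot-x_{i_0}))\ell$ concentrated at a deepest singularity, the same choice $\ell=\lambda(a^*-a)^{-1/(2-p)}$, and the same final optimization over $\lambda$. The only (harmless) difference is in the potential term: the paper bounds $V$ from above by $(\eps-h_0)|x-x_{i_0}|^{-p}$ on a small ball and sends $\eps\to0$ at the end, whereas you split off the regular part $W$ and pass to the limit in the rescaled singular integral by dominated convergence; both are valid.
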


\begin{proof} Without loss of generality, we can assume that $p_1=p$ and $h(x_1)=-h_0<0$. By assumption on $V$,  for every $\eps>0$ there exists $\eta_\eps>0$ such that 
$$
V(x)\le (\eps - h_0) |x-x_1|^{-p}, \quad \forall  |x-x_1|\le 2\eta_\eps.
$$
As in the proof of Lemma \ref{lem:ea->0}, we choose
$$
u_\ell(x)=A_{\ell} \varphi (x-x_1) Q_0(\ell(x-x_1)) \ell
$$
where $0\le \varphi  \in C_c^\infty (\R^2)$ with $\varphi (x)=1$ for $|x|\le \eta_\eps$, $\varphi(x)=0$ for $|x|\ge 2\eta_\eps$, and $A_{\ell}>0$ is a suitable factor to make $\|u_\ell\|_{L^2}=1$. The choice of $\varphi$ ensures that
$$
V(x) \varphi (x-x_1)  \le  (\eps -  h_0) |x-x_1|^{-p} \chi_{\{|x-x_1|\le \eta_\eps\}}.
$$
Since $Q_0,$ $|\nabla Q_0|=O(|x|^{-\frac{1}{2}}e^{-|x|})$ as $|x|\to\infty$ (see \cite[Proposition 4.1]{GidNiNir-81}), when $\ell\to \infty$ independently of $\eps>0$ we have $A_\ell = 1 + o(\ell^{-\infty}),$ 
\begin{align*}
\int_{\R^2} |\nabla u_\ell|^2 - \frac{a}{2}\int_{\R^2}|u_\ell|^4 &= \ell^2 \left( \int_{\R^2} |\nabla Q_0|^2 - \frac{a}{2}\int_{\R^2}|Q_0|^4  \right) +o(1)_{\ell\to \infty} 
\end{align*}
and 
\begin{align*}
\int_{\R^2} V |u_\ell|^2 &\le (\eps - h_0)  A_\ell^2 \int_{|x-x_1|\le \eta_\eps}  \frac{|Q_0(\ell(x-x_1))|^2}{|x-x_1|^{-p}} \ell^2 \d x  \\
&= (\eps - h_0) \ell^{p} A_\ell^2 \int_{|x|\le \ell \eta_\eps}  \frac{|Q_0(x)|^2}{|x|^{p}} \d x \\
&= (\eps - h_0) \ell^{p} \int_{\R^2}  \frac{|Q_0(x)|^2}{|x|^{p}} \d x + o(1)_{\ell\to \infty}.
\end{align*}
Note also that
\bq \label{eq:a*-Q0}
1= \int_{\R^2} |Q_0|^2 = \int_{\R^2}|\nabla Q_0|^2 = \frac{a^*}{2} \int_{\R^2}|Q_0|^4
\eq
because $Q_0=Q/\|Q\|_{L^2}$ and \eqref{eq:a*-Q}. Therefore, in summary, 
\begin{align*}
E(a) \le \cE_a(u_\ell) &\le \ell^2 \left( 1- \frac{a}{a^*} \right) +  (\eps - h_0) \ell^{p} \int_{\R^2}  \frac{|Q_0(x)|^2}{|x|^{p}} \d x + o(1)_{\ell\to \infty} .
\end{align*}
We can choose $\ell=\lambda (a^*-a)^{-1/(2-p)}$ and obtain
\begin{align*}
\frac{E(a)}{(a^*-a)^{-p/(2-p)}} \le  \left[ \frac{\lambda^2}{a^*} + (\eps - h_0) \lambda^p\int_{\R^2}  \frac{|Q_0(x)|^2}{|x|^{p}} \d x \right] + o(1)_{a\to a^*} .
\end{align*}
The desired estimate follows by taking $a\uparrow a^*$, then passing $\eps\to 0$, and finally optimizing over $\lambda>0$. \end{proof}

\begin{lemma}\label{lem:EK} Let $u_a$ be a minimizer for $E(a)$ with $a<a^*$.  When $a$ is sufficiently close to $a^*$, we have
\begin{align*}
-C^{-1}(a^*-a)^{-p/(2-p)} \ge E(a) \ge \int_{\R^2} V |u_a|^2  \ge -C(a^*-a)^{-p/(2-p)}
\end{align*}
and 
$$
 \int_{\R^2}|\nabla u_a|^2 \le C(a^*-a)^{-2/(2-p)} .
$$
\end{lemma}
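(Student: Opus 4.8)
The plan is to combine the upper bound from Lemma \ref{lem:Ea-up} with a complementary lower bound obtained from the Gagliardo--Nirenberg inequality \eqref{eq:GN}. First, the upper bound: by Lemma \ref{lem:Ea-up}, since the infimum over $\lambda>0$ of $\lambda^2/a^* - \lambda^p h_0 \int |Q_0|^2|x|^{-p}$ is strictly negative (it is negative for small $\lambda$, as the $\lambda^p$ term dominates when $0<p<2$), we get $E(a) \le -C^{-1}(a^*-a)^{-p/(2-p)}$ for $a$ close to $a^*$. This also gives the first inequality in the display, and the middle inequality $E(a) \ge \int V|u_a|^2$ is immediate because $\cE_a(u_a) = \int|\nabla u_a|^2 - \frac a2\int|u_a|^4 + \int V|u_a|^2 \ge \int V|u_a|^2$ by \eqref{eq:GN} (the kinetic minus interaction part is $\ge (1-a/a^*)\int|\nabla u_a|^2 \ge 0$).

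Next, the lower bound $\int V|u_a|^2 \ge -C(a^*-a)^{-p/(2-p)}$. Here I would use that $V_- \le h_0' \sum_j |x-x_j|^{-p_j} + (\text{bounded})$ for some constant, so it suffices to bound $\int |x-x_j|^{-p_j}|u_a|^2$ from above. Using the interpolation/Hölder-type bound $\int |x-x_j|^{-s}|u|^2 \le C \|u\|_{L^2}^{2-s}\|\nabla u\|_{L^2}^{s} = C\|\nabla u\|_{L^2}^s$ for $0<s<2$ and $\|u\|_{L^2}=1$ (this follows from the Hardy-type / Gagliardo--Nirenberg-type inequality; one writes $|x|^{-s} \in L^{1/s+1/2} + L^{4/s}$ as in the paper and applies Sobolev), we obtain
\begin{align*}
\int_{\R^2} V_-|u_a|^2 \le C\bigl(1 + \|\nabla u_a\|_{L^2}^{p}\bigr),
\end{align*}
since $p = \max p_j$ over the relevant indices dominates (for $\|\nabla u_a\|_{L^2}$ large the largest exponent wins; the $g$ term and the other regions contribute bounded amounts as $g\in L^\infty_{\rm loc}$ and these singularities are $L^1_{\rm loc}$ against the decaying tail — one may need a crude a priori bound that $u_a$ does not escape to infinity, which follows from the energy bound and $V_-\in L^p+L^q$). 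Then from the middle inequality and the upper bound,
\begin{align*}
-C^{-1}(a^*-a)^{-p/(2-p)} \ge E(a) \ge \int_{\R^2} V|u_a|^2 \ge -\int_{\R^2} V_-|u_a|^2 \ge -C\bigl(1+\|\nabla u_a\|_{L^2}^{p}\bigr).
\end{align*}
Separately, $E(a) = \cE_a(u_a) \ge (1-a/a^*)\|\nabla u_a\|_{L^2}^2 - \int V_-|u_a|^2 \ge (1-a/a^*)\|\nabla u_a\|_{L^2}^2 - C(1+\|\nabla u_a\|_{L^2}^p)$, and combining with $E(a)\le 0$ gives $(1-a/a^*)\|\nabla u_a\|_{L^2}^2 \le C(1+\|\nabla u_a\|_{L^2}^p)$; since $p<2$, this forces $\|\nabla u_a\|_{L^2}^2 \le C(a^*-a)^{-1}\cdot(\text{lower order})$, and bootstrapping the exponent yields $\|\nabla u_a\|_{L^2}^2 \le C(a^*-a)^{-2/(2-p)}$. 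Feeding this kinetic bound back into the inequality $\int V_-|u_a|^2 \le C(1+\|\nabla u_a\|_{L^2}^p) \le C(a^*-a)^{-p/(2-p)}$ gives the remaining lower bound on $\int V|u_a|^2$.

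The main obstacle I anticipate is the book-keeping needed to justify that the singular term with exponent $p$ genuinely dominates: one must control the contributions of the other singularities $|x-x_j|^{-p_j}$ with $p_j<p$ (or with $h(x_j)\ge 0$, which only help), the locally-bounded term $g$, and — most delicately — the possibility that $u_a$ has mass escaping to infinity where $V$ could be large negative; this last point requires invoking $V_-\in L^p(\R^2)+L^q(\R^2)$ together with the $H^1$ bound to show $\int V_-|u_a|^2$ stays controlled by $\|\nabla u_a\|_{L^2}^p$ plus a constant. Once the single clean inequality $\int V_-|u_a|^2 \le C(1+\|\nabla u_a\|_{L^2}^p)$ is established, the rest is the elementary self-improving argument with the exponent $p<2$ sketched above.
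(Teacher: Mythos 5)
Your proposal is correct and follows essentially the same route as the paper: the upper bound and the inequality $E(a)\ge\int V|u_a|^2$ come from Lemma \ref{lem:Ea-up} and \eqref{eq:GN} exactly as in the text, and your Hardy-type bound $\int |x-x_j|^{-s}|u|^2\le C\|\nabla u\|_{L^2}^{s}$ (for $\|u\|_{L^2}=1$) is precisely the paper's operator inequality $s(-\Delta)-|x-y|^{-q}\ge -C_q s^{-q/(2-q)}$ after optimizing over $s$, so your algebraic self-improvement $(1-a/a^*)K\le C(1+K^{p/2})\Rightarrow K\le C(a^*-a)^{-2/(2-p)}$ reproduces the paper's choice $s=(1-a/a^*)/4$. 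The only point to tighten is the claim that singularities with $h(x_j)\ge 0$ ``only help'': one should use the localized bound $h(x)\ge h(x_j)-\delta$ near such $x_j$ rather than the crude global estimate $V_-\le\|h\|_\infty\sum_j|x-x_j|^{-p_j}$ (which would bring in exponents $p_j>p$), a point the paper itself treats in the same summary fashion.
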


We always denote by $C\ge 1$ a general constant independent of $a$.  

\begin{proof} From Lemma \ref{lem:Ea-up} and the Gagliardo-Nirenberg inequality \eqref{eq:GN},
\bq \label{eq:super-easy-bound}
-C^{-1}(a^*-a)^{-p/(2-p)} \ge E(a) \ge \int_{\R^2} V |u_a|^2.
\eq
Moreover, by Sobolev's inequality,  $-\Delta-|x|^{-q}\ge -C_q$ for every $0<q<2$. By scaling, we have
$$
s(-\Delta) -|x-y|^{-q} \ge -C_q s^{-q/(2-q)}, \quad \forall y\in \R^2,\forall s>0.
$$
Therefore, by the definition of $V$ and $p=\max\{p_j:h(x_j)<0\}$,
$$
V(x) \ge-C \sum_{j\,:h(x_j)<0} |x-x_j|^{-p_j} -C \ge s \Delta - C s^{-p/(2-p)} -C, \quad \forall s>0.
$$
Using this with $s=(1-a/a^*)/4$ and inequality \eqref{eq:GN} again we obtain
\begin{align*}  
E(a) + \int V |u_a|^2 &=\int_{\R^2} |\nabla u_a|^2 + 2 \int_{\R^2} V |u_a|^2 -\frac{a}{2} \int_{\R^2} |u_a|^4 \nn\\
& \ge \left( 1-\frac{a}{a^*}-2s \right) \int_{\R^2} |\nabla u_a|^2  - C s^{-p/(2-p)} -C \nn\\
&\ge \frac{1}{2}\left( 1-\frac{a}{a^*} \right) \int_{\R^2} |\nabla u_a|^2  - C (a^*-a)^{-p/(2-p)}. 
\end{align*}
Combining with \eqref{eq:super-easy-bound} we conclude 
$$\int_{\R^2} |\nabla u_a|^2 \le C (a^*-a)^{-2/(2-p)}\quad \text{and}\quad \int_{\R^2} V |u_a|^2 \ge - C (a^*-a)^{-p/(2-p)}.$$
\end{proof}

\begin{proof}[Proof of Theorem \ref{thm2}] Let $u_a$ be a minimizer for $E(a)$ with $a<a^*$ and $a$ sufficiently close to $a^*$. As discussed, we can choose $u_a\ge 0$. We will denote
$$
\eps_a :=  (a^*-a)^{1/(2-p)}.
$$
Note that $\eps_a\to 0$ when $a\uparrow a^*$. \\

{\bf Step 1: Extracting the limit.} From Lemma \ref{lem:EK}, we have
$$ 
\eps_a^2 \int_{\R^2} |\nabla u_a|^2 \le C \quad \text{and}\quad \eps_a^{p} \int_{\R^2} V |u_a|^2 \le -C^{-1} .
$$ 
From the latter estimate and the simple bound 
$$
V(x) \ge -C \sum_{h(x_j)<0} |x-x_{i_0}|^{-p_j} - C,
$$
after passing to a subsequence of $u_a$ if necessary, we can find $i_0\in \{1,2,...,J\}$ such that $h(x_{i_0})<0$ and
$$
\eps_a^{p} \int_{\R^2} \frac{|u_a(x)|^2}{ |x-x_j|^{p_{i_0}}} \d x \ge C^{-1}.
$$
Define  
$$
w_a(x):= \eps_a  u_a(x_{i_0}+ \eps_a x).
$$
Then $\|w_a\|_{L^2}=\|u_a\|_{L^2}=1$. Moreover, from the above estimates, we obtain that $w_a$ is bounded in $H^1(\R^2)$ and 
$$
C^{-1} \le \int_{\R^2} \frac{\eps_a^p |u_a(x)|^2}{ |x-x_{i_0}|^{p_{i_0}}} \d x = \eps_a^{p-p_{i_0}} \int_{\R^2}  \frac{|w_a(x)|^2}{ |x|^{p_{i_0}}} \d x .
$$
This implies that $p_{i_0}=p$ (recall $p=\max\{p_j:h(x_j)<0\}$) and
$$
C^{-1} \le  \int_{\R^2}  \frac{|w_a(x)|^2}{ |x|^{p}} \d x .
$$

Since $w_a$ is bounded in $H^1(\R^2)$,  after passing to a subsequence if necessary, we can assume that $w_a$ converges to a function $w$ weakly in $H^1(\R^2)$ and pointwise. Since 
$$ C^{-1} \le \int_{\R^2}  \frac{|w_a(x)|^2}{ |x|^{p}} \d x \to  \int_{\R^2}  \frac{|w(x)|^2}{|x|^{p}} \d x,$$
we have $w\not\equiv 0$. \\

{\bf Step 2: Relating $w$ and $Q_0$.} Next, we prove that $w$ is an optimizer for the Gagliardo-Nirenberg inequality \eqref{eq:GN}. Recall that from Lemma \ref{lem:EK}  we have
$$ -C^{-1} \eps_a^{-p} \ge E(a) \ge \int_{\R^2} V|u_a|^2 \ge -C \eps_a^{-p}.$$
Since $\eps_a\to 0$ and $p<2$, we obtain
\begin{align}
0 &=\lim_{a\uparrow a^*} \eps_a^{2} \Big( E(a) - \int_{\R^2} V|u_a|^2 \Big) \nn\\
&= \lim_{a\uparrow a^*} \eps_a^{2} \left( \int_{\R^2} |\nabla u_a|^2 -\frac{a}{2} \int_{\R^2}|u_a|^4  \right)\nn \\
& = \lim_{a\uparrow a^*} \left(  \int_{\R^2} |\nabla w_a|^2 -\frac{a}{2} \int_{\R^2}|w_a|^4 \right).\label{eq:Ea0}
\end{align}
As in the proof of Theorem \ref{thm1}, since $w_a$ converges to $w$ weakly in $H^1(\R^2)$, 
\begin{align}
&\lim_{a\uparrow a^*} \left( \int_{\R^2} |\nabla w_a|^2 - \int_{\R^2} |\nabla w|^2 -  \int_{\R^2} |\nabla (w_a-w)|^2 \right)=0, \label{eq:Ea1}\\
&\lim_{a\uparrow a^*} \left( \int_{\R^2} |w_a|^4 - \int_{\R^2} |w|^4 - \int_{\R^2} |w_a-w|^4 \right)=0,\label{eq:Ea2}\\
&\lim_{a\uparrow a^*} \left( \int_{\R^2} |w_a|^2 - \int_{\R^2} |w|^2 - \int_{\R^2} |w_a-w|^2\right)=0. \label{eq:Ea3}
\end{align}
Here last two convergences follows from Brezis-Lieb's lemma \cite{BreLie-83}. Combining \eqref{eq:Ea0}, \eqref{eq:Ea1} and \eqref{eq:Ea2} we have 
\begin{align} \label{eq:Ea4}
\lim_{a\uparrow a^*} \left( \int_{\R^2} |\nabla w|^2+ \int_{\R^2} |\nabla (w_a-w)|^2 - \frac{a}{2}\int_{\R^2} |w|^4 - \frac{a}{2}\int_{\R^2} |w_a-w|^4 \right)=0.
\end{align}
On the other hand, by the Gagliardo-Nirenberg inequality \eqref{eq:GN}, 
\begin{align*}
\int_{\R^2} |\nabla w|^2 -\frac{a}{2} \int_{\R^2}|w|^4 &\ge (1- \|w\|_{L^2}^2) \int_{\R^2} |\nabla w|^2.
\end{align*}
Similarly, using \eqref{eq:Ea3} and the fact that $\|\nabla(w_a-w)\|_{L^2}$ is bounded (since $w_a$ is bounded in $H^1(\R^2)$), we have
\begin{align*}
 \int_{\R^2} |\nabla (w_a-w)|^2 - \frac{a}{2} \int_{\R^2} |w-w_a|^4 &\ge (1- \|w_a-w\|_{L^2}^2) \int_{\R^2} |\nabla(w_a-w)|^2 \\
 & = \|w\|_{L^2}^2 \int_{\R^2} |\nabla(w_a-w)|^2 + o(1)_{a\to a^*}.
\end{align*}
Thus \eqref{eq:Ea4} implies that
$$
\limsup_{a\uparrow a^*} \left(  (1- \|w\|_{L^2}^2) \int_{\R^2} |\nabla w|^2 +  \|w\|_{L^2}^2 \int_{\R^2} |\nabla(w_a-w)|^2\right) \le 0.
$$
Since $0<\|w\|_{L^2}\le 1$, we conclude that $\|w\|_{L^2}=1$ and $\|\nabla(w_a-w)\|_{L^2}\to 0$. Thus $w_a\to w$ strongly in $H^1(\R^2)$. The convergence \eqref{eq:Ea0} then implies that $w$ is an optimizer for the Gagliardo-Nirenberg inequality \eqref{eq:GN}. 

Since $Q_0$ is the unique optimizer for \eqref{eq:GN} up to translations and dilations, we conclude that
$$ w(x)=\beta Q_0(\beta x-y_0)$$
for some $\beta>0$ and $y_0\in \R^2$. The values of $\beta$ and $x_0$ will be determined below.\\

{\bf Step 3: Energy lower bound.} Now we derive a sharp lower bound for $E(a)$ when $a\uparrow a^*$. By the definition of $V$, for every $\delta>0$, there exists $C_\delta>0$ such that 
$$
V(x) \ge (h(x_j)-\delta) \sum_{j: h(x_j)<0} |x-x_j|^{-p_j} - C_\delta.
$$
Therefore,
\begin{align} \label{eq:V-lower-bound-0}
 \int_{\R^2} V |u_a|^2 &\ge \sum_{j: h(x_j)<0} (h(x_j)-\delta) \int_{\R^2} \frac{|u_a(x)|^2}{|x-x_j|^{p_j}} \d x -C_\delta \\
& = \sum_{j: h(x_j)<0} (h(x_j)-\delta) \eps_a^{-p_j} \int_{\R^2} \frac{|w_a(x)|^2}{|x-\eps_a^{-1}(x_{j}-x_{i_0})|^{p_j}} \d x -C_\delta.\nn
\end{align}
Here we have used $u_a(x)=\eps_a^{-1} w_a(\eps_a^{-1}(x-x_{i_0}))$ and the change of variables. 

For every fixed $y\in \R^2$, by H\"older's and Sobolev's inequalities, 
\begin{align*}
&\left| \int_{\R^2} \frac{|w_a(x)|^2-|w(x)|^2}{|x-\eps_a^{-1}y|^{p_j}} \d x \right|\\
& \le \left( \int_{\R^2} \frac{|w_a(x)+w(x)|^2}{|x-\eps_a^{-1}y|^{p_j}} \right)^{1/2} \left( \int_{\R^2} \frac{|w_a(x)-w(x)|^2}{|x-\eps_a^{-1}y|^{p_j}} \right)^{1/2} \\
&\le C \|w_a+w\|_{H^1} \|w_a-w\|_{H^1} \to 0. 
\end{align*}
Moreover, if $y\ne 0$, then we can choose $s>0$ such that $(s+1)p_j<2$ and estimate
\begin{align*}
& \int_{\R^2} \frac{|w(x)|^2}{|x-\eps_a^{-1}y|^{p_j}} \d x = \int_{|x| \le \eps_a^{-1}|y|/2} \frac{|w(x)|^2}{|x-\eps_a^{-1}y|^{p_j}} \d x + \int_{|x| > \eps_a^{-1}|y|/2} \frac{|w(x)|^2}{|x-\eps_a^{-1}y|^{p_j}} \d x \\
&\le \int_{\R^2} \frac{|w(x)|^2}{(\eps_a^{-1}|y|/2)^{p_j}} \d x + \left( \int_{\R^2}\frac{|w(x)|^2}{|x-\eps_a^{-1}y|^{(s+1)p_j}} \d x \right)^{1/(s+1)} \left( \int_{|x| > \eps_a^{-1}|y|/2}|w(x)|^2 \d x  \right)^{s/(s+1)} \\
&\le (\frac{2\eps_a}{|y|})^{p_j} + C_s\|w\|_{H^1}^{2/(s+1)}  \left( \int_{|x| > \eps_a^{-1}|y|/2}|w(x)|^2 \d x  \right)^{s/(s+1)} \to 0.
\end{align*}
Thus we have proved that for every fixed $y\in \R^2$, 
$$
\int_{\R^2} \frac{|w_a(x)|^2}{|x-\eps_a^{-1}y|^{p_j}} \d x = \left\{ \begin{aligned}
\int_{\R^2} \frac{|w(x)|^2}{|x|^{p_j}} \d x + o(1)_{a\to a^*}, \quad \text{if } y=0; \\
o(1)_{a\to a^*}, \quad \text{if } y\ne 0.
\end{aligned}
\right.
$$
Thus we deduce from \eqref{eq:V-lower-bound-0} that
\begin{align} \label{eq:V-lower-bound-1}
 \int_{\R^2} V |u_a|^2 \ge (h(x_{i_0})-\delta) \eps_a^{-p}  \int_{\R^2} \frac{|w(x)|^2}{|x|^{p}} \d x - C_\delta + o(\eps_a^{-p}) . 
\end{align}
Here we have also used $p_{i_0}=p$ (which was already proved before). 

Moreover, using the Gagliardo-Nirenberg inequality \eqref{eq:GN} and the convergence $w_a\to w$ in $H^1(\R^2)$  we have
\begin{align}
\int_{\R^2} |\nabla u_a|^2 -\frac{a}{2} \int |u_a|^4 &\ge \left( 1- \frac{a}{a^*} \right) \int_{\R^2} |\nabla u_a|^2 \nn\\
&= \left( 1- \frac{a}{a^*} \right)  \eps_a^{-2} \int_{\R^2}|\nabla w_a|^2 \nn\\
&= (a^*)^{-1} \eps_a^{-p} \Big( \int_{\R^2}|\nabla w|^2 + o(1)_{a\to a^*} \Big). \label{eq:V-lower-bound-2}
\end{align}
Here we have also used  $(a^*-a)\eps_a^{-2}=\eps_a^{-p}$.

Combining \eqref{eq:V-lower-bound-1} and \eqref{eq:V-lower-bound-2}, we obtain
$$
\eps_a^{p} E(a) \ge  (a^*)^{-1}\int_{\R^2}|\nabla w|^2  + (h(x_{i_0})-\delta) \int_{\R^2} \frac{|w(x)|^2}{|x|^{p}} \d x - C_\delta \eps_a^{p}+ o(1)_{a\to a^*} .
$$
We take $a\uparrow a^*$, and then pass $\delta\to 0$. We get
\bq \label{eq:lower-sharp-aa}
\liminf_{a\uparrow a^*} \eps_a^{p} E(a) \ge (a^*)^{-1}\int_{\R^2}|\nabla w|^2 + h(x_{i_0}) \int_{\R^2} \frac{|w(x)|^2}{|x|^{p}} \d x.
\eq

{\bf Step 4: Conclusion.} Now we use $w(x)=\beta Q_0(\beta x -y_0)$. From \eqref{eq:lower-sharp-aa} we have
$$
\liminf_{a\uparrow a^*} \eps_a^{p} E(a) \ge \frac{\beta^2}{a^*} + \beta^p h(x_{i_0}) \int_{\R^2} \frac{|Q_0(x-y_0)|^2}{|x|^{p}} \d x
$$
We have used $\|\nabla Q_0\|_{L^2}=1$; see \eqref{eq:a*-Q0}.

Note that $1>h(x_{i_0})\ge -h_0$. Moreover, by well-known rearrangement inequalities, see e.g.  \cite[Chapter 3]{LieLos-01}, we deduce that $Q_0$ is radially symmetric decreasing and 
$$
\int_{\R^2}\frac{|Q_0(x-y_0)|^2}{|x|^{p}} \d x \le \int_{\R^2}\frac{|Q_0(x)|^2}{|x|^{p}} \d x 
$$
with equality if and only if $y_0=0$. Thus 
\begin{align*}
\liminf_{a\uparrow a^*} \eps_a^{p} E(a) &\ge \frac{\beta^2}{a^*} + \beta^p h(x_{i_0}) \int_{\R^2} \frac{|Q_0(x-y_0)|^2}{|x|^{p}} \d x \\
& \ge \frac{\beta^2}{a^*} - \beta^p h_0 \int_{\R^2} \frac{|Q_0(x)|^2}{|x|^{p}} \d x.
\end{align*}

On the other hand, we have proved in Lemma \ref{lem:Ea-up} that
\begin{align*}
\limsup_{a\uparrow a^*}\frac{E(a)}{(a^*-a)^{-p/(2-p)}} \le \inf_{\lambda>0} \left[ \frac{\lambda^2}{a^*} - \lambda^p h_0 \int_{\R^2}  \frac{|Q_0(x)|^2}{|x|^{p}} \d x \right] .
\end{align*}
Therefore we conclude that
\begin{align*}
\lim_{a\uparrow a^*}\frac{E(a)}{(a^*-a)^{-p/(2-p)}} = \inf_{\lambda>0} \left[ \frac{\lambda^2}{a^*} - \lambda^p h_0 \int_{\R^2}  \frac{|Q_0(x)|^2}{|x|^{p}} \d x \right] ;
\end{align*}
moreover, $h(x_{i_0})=-h_0$, $y_0=0$ and $\beta$ is the optimal value in 
\begin{align*}
&\inf_{\lambda>0} \left[ \frac{\lambda^2}{a^*} - \lambda^p h_0 \int_{\R^2}  \frac{|Q_0(x)|^2}{|x|^{p}} \d x \right] \\
&=\left( h_0 \int_{\R^2}  \frac{|Q_0(x)|^2}{|x|^{p}} \d x \right)^{2/(2-p)} (a^*)^{p/(2-p)} \left( (p/2)^{2/(2-p)}- (p/2)^{p/(2-p)}\right). 
\end{align*}
This means
$$ \beta= \left( \frac{a^* h_0 p }{2} \int_{\R^2}  \frac{|Q_0(x)|^2}{|x|^{p}} \d x \right)^{1/(2-p)}.$$
The proof is finished.
\end{proof}

\end{document}